\newtheorem{thm}{Theorem}[section]
\newtheorem{lemma}[thm]{Lemma}
\newtheorem{prop}[thm]{Proposition}
\newtheorem{cor}[thm]{Corollary}
\theoremstyle{definition}
\newtheorem{defn}[thm]{Definition}
\begin{document}
    \title{\Large\textbf{How many elements of a Coxeter group have a unique reduced expression?}}
\date{}
      \author{Sarah Hart\thanks{\noindent Dept of Economics, Mathematics and
      Statistics,
Birkbeck College, Malet Street, London, WC1E 7HX. \hspace*{4mm}
s.hart@bbk.ac.uk}}
  \maketitle
 \vspace*{-10mm}
\begin{abstract}
Let $(W,R)$ be an arbitrary Coxeter system. We determine the number of elements of $W$ that have a unique reduced expression.
\end{abstract}
 \section{Introduction}
 
 Given a Coxeter group $W$ with distinguished generating set $R$, every element $w$ of $W$ may be written as a word in $R$. A reduced expression for $w$ is one of minimal length. There are usually several different reduced expressions for any given element. There are results that enable us, in special cases, to count the number of reduced expressions for elements. For example Stanley~\cite{stanley84} gave an algorithm to enumerate the number of reduced expressions for elements of the symmetric group. Eriksson~\cite{eriksson} gave a recursive method for elements of affine Weyl groups. Stembridge investigated the reduced expressions for so-called fully commutative elements \cite{stembridge}. It seems fairly natural to ask about elements that have a unique reduced expression. In this short article we show how to determine very quickly from the Coxeter graph of an arbitrary Coxeter group $W$ the number of elements that have a unique reduced expression. Partial results in this direction are known. For the case of finite Coxeter groups these elements form a 2-sided Kazhdan-Lusztig cell, studied in \cite{cells}. Enumeration of elements with a unique reduced expression for finite Coxeter groups follows from Proposition 4 of that paper and the examples that follow it.

 To state our main result we recall some well-known notation. For more detail on this and other aspects of Coxeter groups see, for example, \cite{humphreys}. A Coxeter system $(W,R)$ is a group $W$ with a generating set $R$ such that $W = \langle R | (rs)^{m_{rs}} = 1; r, s \in R\rangle$, where $m_{rr} = 1$ for all $r \in R$, and $m_{rs} = m_{sr}$. That is, $m_{rs}$ is the order of $rs$. In particular, the elements of $R$ are involutions. We write $m_{rs} = \infty$ where $rs$ has infinite order. A nice way to represent this information is via a Coxeter graph: this is an undirected labelled graph $\Gamma$ with vertex set $R$, where distinct $r, s$ in $R$ are joined by an edge labelled $m_{rs}$ whenever $m_{rs} \geq 3$.  (Usually by convention the label 3 is omitted.) We say that $\Gamma$ is {\em simply laced} if every edge label is 3. Once the generating set $R$ is fixed, then $\Gamma$ is uniquely determined, and in what follows we will assume this has happened. Our technique for counting elements with unique reduced expression relies on an analysis of the Coxeter graph. We remark that other kinds of elements can be counted using the Coxeter graph, such as in the elegant paper by Shi \cite{shi} using the Coxeter graph to enumerate Coxeter elements.
 
 \begin{defn}
 Let $\Gamma$ be a Coxeter graph with associated Coxeter group $W$. We define $U(\Gamma)$ to be the number of words in $W$ with a unique reduced expression. 
 \end{defn}
 
 It will turn out (Lemma \ref{lemma1}) that it is quick to reduce the work to the irreducible case (that is, where the Coxeter graph is connected). We will therefore summarise here our results for the irreducible case. 
 
 \begin{thm}\label{main}
 Suppose $\Gamma$ is an irreducible Coxeter graph with $n$ vertices. \begin{enumerate}
 \item If $\Gamma$ is a simply laced tree, then $U(\Gamma) = n^2 + 1$.
 \item Suppose that $\Gamma$ is a tree with no infinite bonds and exactly one edge with a label $m$ greater than three. Let $a$ and $b$ be the orders of the two induced subgraphs obtained by removal of this edge (so $a + b = n$). Then $$U(\Gamma) = \left\{\begin{array}{ll} \textstyle\frac{1}{2}mn^2 + 1 - 2ab   & {\text{if $m$ even;}}\vspace*{4mm}\\
   \textstyle\frac{1}{2}(m-1)n^2 + 1  & {\text{if $m$ odd.}}
   \end{array}\right.$$
   \item If $\Gamma$ is any other irreducible Coxeter graph then $U(\Gamma) = \infty$.
 \end{enumerate} 
 \end{thm}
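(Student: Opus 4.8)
The starting point is a combinatorial criterion for unique reduced expressions. By Matsumoto's theorem any two reduced expressions of $w\in W$ differ by a sequence of braid moves $\underbrace{rsr\cdots}_{m_{rs}}\mapsto\underbrace{srs\cdots}_{m_{rs}}$, so $w$ has a unique reduced expression precisely when its reduced word admits no braid move; that is, when it contains no factor equal to an alternating word $\underbrace{rsr\cdots}_{m_{rs}}$ of length $m_{rs}$ for some $r\neq s$ with $m_{rs}<\infty$. Call such a word \emph{rigid}. The case $m_{rs}=2$ of this condition forbids two adjacent commuting letters, so the letters of a rigid word trace a walk in $\Gamma$ (consecutive letters are joined by an edge), and in particular are distinct. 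I would first record that a rigid word is automatically reduced: by Tits's solution of the word problem a word fails to be reduced only if some sequence of braid moves exposes a factor $rr$, but a rigid word admits no braid move at all and has no such factor to begin with. Hence $U(\Gamma)$ equals the number of rigid words, and Lemma \ref{lemma1} lets me assume $\Gamma$ connected.

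The governing principle, which I would isolate next, is that a rigid walk can reverse direction only across an edge whose label is at least $4$: reversing across an edge $\{r,s\}$ creates a factor $rsr$, and when $m_{rs}=3$ this is itself a forbidden braid factor. Across an edge of label $m\ge 4$ one may instead oscillate, but only in a block $rsrs\cdots$ of length at most $m-1$. If $\Gamma$ is a tree all of whose edges carry the label $3$ (case 1), no reversal is ever possible, so a rigid walk never revisits a vertex and is exactly a simple directed path; counting the empty path, the $n$ single vertices, and the $n(n-1)$ directed paths between distinct vertices (each pair joined by a unique path in a tree) gives $U(\Gamma)=n^2+1$.

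For case 2 the single high-label edge $e=\{r,s\}$ is the only site of a reversal, and a walk that leaves $e$ into one of the subtrees $T_r,T_s$ can never return without backtracking across a label-$3$ edge. Thus a rigid word either avoids $e$ (the empty word or a simple directed path inside $T_r$ or $T_s$, giving $1+a^2+b^2$ words), or contains one maximal alternating block $O=\underbrace{rs\cdots}_{k}$ on $e$ with $2\le k\le m-1$, flanked by a simple path running into its first letter and one running out of its last letter, each chosen freely in the relevant subtree. The point responsible for the parity split is that for even $k$ the block crosses $e$, placing the two flanking paths in opposite subtrees ($ab+ba$ choices), whereas for odd $k$ the block returns to its starting endpoint, placing them in the same subtree where they may be chosen independently ($a^2$ or $b^2$ choices). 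Summing $2ab$ over even $k$ and $a^2+b^2$ over odd $k$ in $[2,m-1]$, adding $1+a^2+b^2$, and simplifying with $a+b=n$ yields $\tfrac12 mn^2+1-2ab$ for $m$ even and $\tfrac12(m-1)n^2+1$ for $m$ odd; case 1 is the value at $m=3$.

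Finally, for case 3 I would exhibit rigid words of unbounded length. If $\Gamma$ has a cycle, pick an induced cycle $v_1\cdots v_k$ with $k\ge 3$; in $(v_1v_2\cdots v_k)^N$ every three consecutive letters are distinct, so there is no alternating factor and the word is rigid for all $N$. If $\Gamma$ is a tree with an edge labelled $\infty$, the arbitrarily long oscillations $rsrs\cdots$ on that edge are rigid. If $\Gamma$ is a tree with at least two edges of label $\ge 4$, a walk may bounce between them, turning around at each by a short oscillation; retracing the connecting path between turns produces no immediate backtrack because every reversal is absorbed by a high-label edge, so these walks are rigid and unbounded as well. In each case $U(\Gamma)=\infty$. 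The principal obstacle throughout is the structural analysis behind case 2---showing that every rigid word factors uniquely as a flanked oscillation block and that every such expression is rigid---after which the enumeration and parity bookkeeping are routine.
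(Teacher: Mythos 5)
Your proposal is correct and follows essentially the same skeleton as the paper's proof: the same three constructions (powers of a cycle word, arbitrarily long oscillations on an infinite bond, a walk bouncing between two edges of label at least $4$) dispose of case 3, exactly as in Theorem~\ref{thm1}, and the same decomposition --- words avoiding the high edge versus words containing a single alternating block $[rs]^k$ with $2 \le k \le m-1$ flanked by paths in the two subtrees, split according to the parity of $k$ --- yields cases 1 and 2 with the identical sums $2ab$ per even $k$ and $a^2+b^2$ per odd $k$, as in Proposition~\ref{prop} and Theorem~\ref{m}. Two of your choices do genuinely improve the bookkeeping. First, your rigid-word lemma (a word admitting no braid move and no factor $rr$ is automatically reduced, by Tits's solution of the word problem) makes $U(\Gamma)$ literally the number of rigid words; the paper argues only from Matsumoto's theorem, which applies to expressions already known to be reduced, so its assertions in Theorem~\ref{thm1} that, say, $(r_1\cdots r_n)^k$ ``has a unique reduced expression'' tacitly assume the exhibited word is reduced --- your lemma supplies exactly that missing justification. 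Second, by counting \emph{directed} words rather than undirected chains you avoid the paper's pairing of $w$ with $w^{-1}$ and its separate treatment of the palindromic (involution) case $r_1 = s_j$ in the proof of Theorem~\ref{m}; your count of $a$ in-paths times $a$ or $b$ out-paths per block is cleaner and produces the same totals. One slip you should patch: as literally stated, your rigidity condition only forbids alternating factors $[rs]^{m_{rs}}$ for \emph{distinct} $r,s$, which does not exclude a factor $rr$; thus $rr$ would qualify as ``rigid,'' and your deduction that consecutive letters of a rigid word are joined by an edge does not follow. You must build ``no factor $rr$'' into the definition of rigid --- harmless, since you already use exactly that property when invoking Tits --- after which the argument, including your flagged ``principal obstacle'' (the unique prefix--block--suffix factorization, which the paper proves via its cycle-contradiction argument), goes through.
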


 In Section 2 we prove the main results. In Section 3 we give a few example calculations. We finish this section with a final piece of notation. 
 For distinct elements $r$ and $s$ of $R$, we write $[rs]^{n}$ for the (not necessarily reduced) expression with $n$ terms beginning with $r$ and alternating $rsrs\cdots$. So, for example $[rs]^5 = rsrsr$. An {\em elementary operation} on a word consists of replacing $[rs]^{m_{rs}}$ with $[sr]^{m_{rs}}$. It is well known \cite{matsumoto} that any two reduced expressions for an element $w$ of a Coxeter group can be obtained from one another by a sequence of elementary operations.

 \section{Main Results}
 
 In this section we first give in Theorem \ref{thm1} necessary conditions for $U(\Gamma)$ to  be finite. I am grateful to Nathan Reading for reading an earlier version of this paper and pointing out to me that Theorem \ref{thm1} can be deduced from the first part of the proof of \cite[Theorem 5.1]{stembridge}. I have included my proof here for the reader's convenience. These necessary conditions will turn out also to be sufficient conditions. For each case not eliminated by Theorem \ref{thm1} we then find an expression for $U(\Gamma)$, in particular showing that $U(\Gamma)$ is finite. Recall that a {\em chain} in a graph is a path containing at least one vertex that does not contain any cycles. The length of the chain is the number of vertices in the chain.
 
 \begin{thm}\label{thm1}
 Let $\Gamma$ be the Coxeter graph of $W$. Suppose $W$ has finitely many elements with a unique reduced expression. Then $\Gamma$ is finite and each connected component of $\Gamma$ is a tree with no infinite bonds and at most one edge label greater than three. 
 \end{thm}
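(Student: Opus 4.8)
The plan is to prove the contrapositive: if $\Gamma$ violates any one of the stated conditions, then $W$ has infinitely many elements with a unique reduced expression. The engine is a combinatorial reformulation of the property. Matsumoto's theorem, quoted above, shows that a \emph{reduced} word is the unique reduced expression of its element exactly when no elementary operation can be applied to it. I would first upgrade this to arbitrary words by invoking Tits' solution to the word problem: every word can be carried to a reduced word using elementary operations together with deletions of factors $ss$, and a word fails to be reduced only if some sequence of elementary operations exposes a factor $ss$. It follows that if a word $s_1 \cdots s_\ell$ has no two equal adjacent letters and contains no factor $[rs]^{m_{rs}}$ for any $r \neq s$ with $m_{rs} < \infty$, then no move of any kind can ever be applied to it; such a word is automatically reduced and is the unique reduced expression of the element it represents. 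Call such a word \emph{admissible}. Since distinct generators are either adjacent in $\Gamma$ or commute, the consecutive letters of an admissible word are forced to be adjacent vertices of $\Gamma$; thus admissible words are precisely the walks in $\Gamma$ containing no alternating factor $[rs]^{m_{rs}}$, and the task becomes to produce infinitely many such walks in each bad case.

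Two of the failures are immediate. If $\Gamma$ has infinitely many vertices, each single generator is an admissible word, giving infinitely many elements with a unique reduced expression. If $\Gamma$ has an infinite bond $m_{rs} = \infty$, then for every $n$ the word $[rs]^n$ is admissible: its consecutive letters form the adjacent pair $\{r,s\}$, and as $m_{rs} = \infty$ there is no finite alternating factor to forbid. This settles the requirements that $\Gamma$ be finite and free of infinite bonds.

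For the remaining failures I would build arbitrarily long admissible walks. Suppose a component contains a cycle $v_0, v_1, \dots, v_{k-1}$ with $k \geq 3$, and take $w_n = (v_0 v_1 \cdots v_{k-1})^n$. Consecutive letters traverse cycle edges, and because $k \geq 3$ no vertex recurs within any three consecutive letters, so every maximal alternating run has length $2 < 3 \leq m_{v_i v_{i+1}}$; hence $w_n$ is admissible for all $n$. Suppose instead a component, now a tree, carries two edges $e$ and $f$ of label at least $4$. Join them by the simple path $a\,b \cdots c\,d$ with $e = \{a,b\}$, $f = \{c,d\}$ and $b, c$ interior, and let $w_n$ bounce $n$ times between the outer ends $a$ and $d$, traversing the path forward, reversing, traversing it back, reversing, and repeating. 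By construction the only direction reversals occur at $a$ and at $d$, producing the alternating factors $bab$ and $cdc$ of length $3$; since $e$ and $f$ have label at least $4$ these are harmless, while every interior edge is crossed monotonically and so contributes only alternating runs of length $2$. Each $w_n$ is therefore admissible, and letting $n \to \infty$ produces infinitely many elements with a unique reduced expression. (If $e$ and $f$ share a vertex the identical bounce between the two outer ends applies.)

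The load-bearing step is the reformulation in the first paragraph: once one knows, via Tits, that the mere absence of applicable moves forces reducedness, all reducedness questions evaporate and the problem collapses to elementary walk-combinatorics, with no length computation inside $W$. I expect the most delicate point to be the two-high-edge construction, where the walk must be arranged so that its unavoidable direction reversals --- each of which creates a length-three alternating factor on a path --- fall only across the two high-label edges, the only edges on which a length-three factor is allowed; checking that every intermediate label-three edge is crossed strictly monotonically is the step needing care.
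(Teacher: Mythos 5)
Your proposal is correct and follows essentially the same route as the paper: your witness elements (powers of $[rs]^2$ across an infinite bond, powers of the cycle word, and the bouncing walk between the two high-label edges) coincide with the paper's, your unified bounce construction simply covering in one stroke the paper's two separate chain cases (adjacent and non-adjacent high-label edges). Your explicit admissibility lemma via Tits' theorem is in fact a sharpening of a step the paper leaves implicit, namely that a word containing no factor $ss$ and admitting no elementary operation is automatically reduced, not merely uniquely reduced among reduced words.
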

 \begin{proof} Clearly $\Gamma$ is finite, otherwise $R$ would constitute an infinite set of elements of $W$ each having a unique reduced expression. If $m_{rs} = \infty$ for some $r, s \in R$, then $(rs)^k$ has a unique reduced expression for all positive integers $k$.  If $\Gamma$ contains a cycle then for some $n$ with $n \geq 3$ there are $r_1, \ldots, r_n$ in $R$ for which $m_{r_ir_{i+1}} \geq 3$ when $i < n$ and also $m_{r_nr_1} \geq 3$. Now $(r_1\cdots r_n)^k$ has a unique reduced expression for all positive integers $k$.  This is because any two reduced expressions for a given element can be obtained from each other by a sequence of elementary operations and clearly no elementary operations are possible in this element. We assume from now on that $\Gamma$ has no cycles and no infinite bonds. Suppose that $\Gamma$ contains a chain of the following form, where $m \geq m' \geq 4$.  
 \begin{center} 
 \unitlength 1.00mm
            \linethickness{0.4pt}
             \begin{picture}(34.00,20)(0,0)
           \put(10,10){\line(1,0){12}}
           \put(10,10.00){\circle*{2.00}}
           \put(22,10.00){\circle*{2.00}}
           \put(34,10.00){\circle*{2.00}}
           \put(10,7){\makebox(0,0)[cc]{$r$}}
           \put(22,7){\makebox(0,0)[cc]{$s$}}
           \put(34,7){\makebox(0,0)[cc]{$t$}}
            \put(16,12){\makebox(0,0)[cc]{$m$}}
             \put(28,13){\makebox(0,0)[cc]{$m'$}}
           \put(22,10){\line(1,0){12}}
         
           \end{picture}\end{center}
Consider $w=srst$. Then $w^k = srstsrstsrst \cdots srst$. Here we do have subexpressions $sts$ and $srs$. But to use an elementary operation we require $[st]^{m'}$, $[ts]^m$, $[rs]^m$ or $[sr]^m$, and so because $m$ and $m'$ are both at least 4, no such transformations are possible. Thus again $w^k$ has a unique expression for all positive integers $k$. 
Finally suppose $\Gamma$ contains a chain of the following form, where $m \geq m' \geq 4$.
 \begin{center} 
 \unitlength 1.00mm
            \linethickness{0.4pt}      
 \begin{picture}(70.00,20)(0,0)
         \put(10,10){\line(1,0){12}}
         \put(22,10.00){\line(1,0){8}}
         \put(50,10.00){\line(1,0){8}}
         \put(58,10){\line(1,0){12}}
        
         \put(58.00,10.00){\circle*{2.00}}
         \put(10,10.00){\circle*{2.00}}
         \put(22,10.00){\circle*{2.00}}
         \put(70,10.00){\circle*{2.00}}
         \put(10,7){\makebox(0,0)[cc]{$r_{1}$}}
         \put(22,7){\makebox(0,0)[cc]{$r_{2}$}}
         \put(58,7){\makebox(0,0)[cc]{$r_{n-1}$}}
         \put(70,7){\makebox(0,0)[cc]{$r_{n}$}}
             \put(16,12){\makebox(0,0)[cc]{$m$}}
                      \put(64,13){\makebox(0,0)[cc]{$m'$}}
         \put(35.00,10.00){\line(1,0){0.4}}
         \put(45.00,10.00){\line(1,0){0.4}}
         \put(40.00,10.00){\line(1,0){0.4}}
         \end{picture}    \end{center}
This time let $w = r_1r_2\cdots r_{n-2}r_{n-1}r_nr_{n-1}r_{n-2}\cdots r_2$. In $w^k$ for $k \geq 1$ the only expressions $[rs]^i$ for any $i$ greater than 2 are $[r_{n-1}r_n]^3$ and $[r_{2}r_1]^3$. However as $m_{r_{n-1}r_n} = m'$ and $m_{r_1r_2} = m$, and both of these are greater than 3, we see that once more no elementary operations are possible. Hence we have infinitely many elements with a unique reduced expression. We conclude that if $W$ only has finitely many such elements, then $\Gamma$ is a forest each of whose connected components is a tree with no infinite bonds and at most one edge label greater than three. 
\end{proof}

\begin{lemma}\label{lemma1}
Suppose $W$ is a Coxeter group with Coxeter graph $\Gamma$ having connected components $\Gamma_1$, \ldots, $\Gamma_n$. Then $U(\Gamma) = \left(\sum_{i=1}^n U(\Gamma_i)\right) - n + 1$.
\end{lemma}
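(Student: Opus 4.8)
The plan is to exploit the fact that distinct connected components of $\Gamma$ correspond to commuting sets of generators, so that $W$ decomposes as an internal direct product $W = W_1 \times \cdots \times W_n$, where $W_i = \langle R_i \rangle$ is the standard parabolic Coxeter group of $\Gamma_i$ and $R_i$ is its vertex set. The structural facts I would record first are that every $w \in W$ factors uniquely as $w = w_1 \cdots w_n$ with $w_i \in W_i$, that the factors pairwise commute, and that $\ell(w) = \sum_i \ell_i(w_i)$, where $\ell_i$ denotes length in $W_i$ (standard length-additivity for direct products, see \cite{humphreys}). I would also note that the \emph{support} of an expression (the set of generators occurring in it) is preserved by elementary operations, since replacing $[rs]^{m_{rs}}$ by $[sr]^{m_{rs}}$ uses exactly the generators $\{r,s\}$; because any two reduced expressions are linked by elementary operations, all reduced expressions of a given element share the same support. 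In particular, for $w \in W_i$ every reduced expression lies entirely in $R_i$, so the reduced expressions of $w$ computed in $W$ coincide with those computed in $W_i$.

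The heart of the argument is a characterisation of which $w$ have a unique reduced expression: I claim $w = w_1 \cdots w_n$ has a unique reduced expression if and only if at most one factor $w_i$ is nontrivial and that factor has a unique reduced expression in $W_i$. For the forward implication I argue by contraposition. Suppose two factors are nontrivial. Choosing a reduced expression $\rho_k$ for each $w_k$ (using only $R_k$), the concatenation $\rho_1 \rho_2 \cdots \rho_n$ is a reduced expression for $w$, since its length equals $\sum_k \ell_k(w_k) = \ell(w)$. Taking the first two nonempty blocks, the last letter $a$ of the first and the first letter $b$ of the second lie in different components, so $a \neq b$ and $ab = ba$; swapping them yields a second reduced expression, and $w$ is not unique. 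Thus uniqueness forces all but one factor to be trivial, and the support remark shows the surviving factor has a unique reduced expression in its own component. The reverse implication is immediate from the same support remark.

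With this characterisation, counting is straightforward. The elements of $W$ with a unique reduced expression are exactly the identity together with, for each $i$, the \emph{nontrivial} elements of $W_i$ having a unique reduced expression; these families are pairwise disjoint. Since there are $U(\Gamma_i) - 1$ nontrivial such elements in $W_i$ (removing the identity, which always has the empty reduced expression), we obtain
$$U(\Gamma) = 1 + \sum_{i=1}^n \bigl(U(\Gamma_i) - 1\bigr) = \left(\sum_{i=1}^n U(\Gamma_i)\right) - n + 1.$$
I expect the main obstacle to be the forward direction of the characterisation, namely making the "shuffle" step fully rigorous by exhibiting two adjacent, distinct, commuting letters in a reduced expression whenever two factors are nontrivial; the concatenation device above is intended precisely to pin this down. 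An alternative that sidesteps the general bookkeeping is to establish the two-component identity $U(\Gamma_1 \sqcup \Gamma_2) = U(\Gamma_1) + U(\Gamma_2) - 1$ first and then induct on $n$.
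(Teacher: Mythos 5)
Your proof is correct and follows essentially the same route as the paper: decompose $W$ as the direct product $W_1 \times \cdots \times W_n$, show that an element with a unique reduced expression must lie in a single factor $W_j$ (the paper does this by commuting the whole block $w_j$ to the front and back, where you swap two adjacent letters from different components), and then count $1 + \sum_i \bigl(U(\Gamma_i)-1\bigr)$. Your version merely makes explicit two facts the paper leaves implicit --- length-additivity across factors and preservation of support under elementary operations --- so there is nothing substantive to add.
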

\begin{proof} We have that $W$ is isomorphic to the direct product $W_1 \times \cdots \times W_n$ where for each $i$, the Coxeter group $W_i$ has corresponding Coxeter graph $\Gamma_i$. Suppose a non-identity element $w$ of $W$ has a unique reduced expression. We can write $w$ canonically as  $w = w_1\cdots w_n$ where $w_i \in W_i$, and $w_j \neq 1$ for some $j$ as $w \neq 1$. Then $$w = w_1\cdots w_{j-1}w_j w_{j+1} \cdots w_n = w_jw_1\cdots w_{j-1}w_{j+1}\cdots w_n = w_1\cdots w_{j-1}w_{j+1} \cdots w_nw_j.$$ Since there is a unique reduced expression for $w$, this implies $w_i = 1$ whenever $i \neq j$, and also that $w_j$ has a unique reduced expression in $W_j$. That is, every non-identity element with a unique expression in $W$ is contained in some $W_j$ and has a unique expression in that $W_j$. Clearly every non-identity element of $W_j$ with a unique expression in $W_j$ also has a unique expression in $W$. Therefore there are $\sum_{i=1}^n (U(\Gamma_i) - 1)$ non-identity elements of $W$ that have unique reduced expressions. Hence $U(\Gamma) = \left(\sum_{i=1}^n U(\Gamma_i)\right) - n + 1$.
\end{proof}

We may therefore restrict our attention to the case when $W$ is irreducible, which is equivalent to $\Gamma$ being connected. By Theorem \ref{thm1} we can assume $\Gamma$ is a finite tree with no infinite bonds and at most one edge label $m$ being greater than 3. We require the following easy lemma about chains.

\begin{lemma}
\label{chain} A tree of order $n$ contains precisely $\binom{n}{2}$ chains of length at least 2, and $\binom{n+1}{2}$ chains in total.
\end{lemma}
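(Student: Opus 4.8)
The plan is to count the chains of length at least two by setting up a bijection with the unordered pairs of distinct vertices, and then to handle the length-one chains separately. The single structural fact I would lean on is the defining property of a tree: between any two distinct vertices there is exactly one simple path. Everything else is bookkeeping.

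First I would observe that any chain $P$ of length at least two, being a path on at least two vertices, has two well-defined distinct endpoints $u$ and $v$, and so determines the unordered pair $\{u,v\}$. This gives a map from chains of length at least two to $2$-element subsets of the vertex set. Conversely, given distinct vertices $u$ and $v$, the unique simple path joining them in the tree is a chain of length at least two whose endpoints are $u$ and $v$. I would then check that these two assignments are mutually inverse: surjectivity is exactly the construction just described, and injectivity holds because in a tree a chain is recovered from its pair of endpoints by uniqueness of paths, so two chains of length at least two with the same endpoint pair must coincide. Hence the number of such chains equals the number of $2$-element subsets of an $n$-element set, namely $\binom{n}{2}$.

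To obtain the total, I would add the chains of length one, which by the definition of length are precisely the single vertices; there are exactly $n$ of them and none of the longer chains. Summing gives
\[
\binom{n}{2} + n = \frac{n(n-1)}{2} + n = \frac{n(n+1)}{2} = \binom{n+1}{2},
\]
as claimed. I do not expect any genuine obstacle here; the only point that needs a moment of care is the injectivity of the endpoint map, and this is immediate once the uniqueness of paths in a tree is invoked.
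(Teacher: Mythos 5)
Your proof is correct and follows essentially the same route as the paper: both arguments rest on the uniqueness of the path between any two distinct vertices of a tree, giving a bijection between chains of length at least $2$ and $2$-element vertex subsets, and then add the $n$ single-vertex chains to obtain $\binom{n}{2} + n = \binom{n+1}{2}$. Your version merely spells out the bijection (injectivity and surjectivity of the endpoint map) more explicitly than the paper does.
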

\begin{proof}
In a tree there is a unique chain between each pair of vertices (otherwise there would be cycles). Therefore there are precisely $\binom{n}{2}$ chains of length at least 2. Adding the $n$ chains of length 1 (each consisting of a single vertex) we see that there are $\binom{n+1}{2}$ chains in total.
\end{proof}

\begin{prop}\label{prop}
Suppose $\Gamma$ is a simply laced tree with $n$ vertices for some positive integer $n$. Then $U(\Gamma) = n^2 + 1$. In particular, $U(\Gamma)$ is finite. 
\end{prop}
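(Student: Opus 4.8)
The plan is to reduce this counting problem to a purely combinatorial count of paths in the tree $\Gamma$, using the elementary-operation characterization recalled in the introduction: since any two reduced expressions of an element are connected by a sequence of elementary operations, an element $w$ has a unique reduced expression precisely when no elementary operation can be applied to one (hence any) of its reduced words. First I would make the operations explicit in the simply laced setting, where $m_{rs} \in \{2,3\}$. For non-adjacent $r,s$ (so $m_{rs}=2$) the operation swaps a consecutive substring $rs$ to $sr$, and for adjacent $r,s$ (so $m_{rs}=3$) it replaces a substring $rsr$ by $srs$. Consequently a reduced word $x_1 x_2 \cdots x_\ell$ admits no elementary operation if and only if (i) every consecutive pair $x_i x_{i+1}$ consists of adjacent vertices of $\Gamma$ (otherwise they commute and may be swapped), and (ii) $x_i \neq x_{i+2}$ for all $i$ (otherwise an $rsr$ braid move applies).

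Conditions (i) and (ii), together with $x_i \neq x_{i+1}$, say exactly that $x_1, \ldots, x_\ell$ is a non-backtracking walk in $\Gamma$. Here I would invoke the tree hypothesis: a non-backtracking walk in a tree cannot repeat a vertex, since a first repetition $x_i = x_j$ with $j-i \geq 3$ would close up to a genuine cycle on the distinct vertices $x_i, \ldots, x_{j-1}$, contradicting acyclicity (the cases $j-i = 1,2$ are excluded by reducedness and by (ii)). Hence the words admitting no elementary operation are exactly the directed simple paths of $\Gamma$, that is, the directed readings of the chains of Lemma \ref{chain}, with single vertices counting as trivial paths. I would then verify the converse and reducedness simultaneously: any directed simple-path word satisfies (i) and (ii), so the only word reachable from it by elementary operations is itself, which has no repeated adjacent letter and is therefore reduced, hence is the unique reduced expression of the element it represents. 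This yields a bijection between non-identity elements of $W$ with a unique reduced expression and directed simple paths of $\Gamma$.

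Finally I would count. By Lemma \ref{chain} there are $\binom{n}{2}$ chains with at least two vertices and $n$ chains consisting of a single vertex; a single-vertex chain yields one word (a generator), while each longer chain can be read in two directions. Adding the identity, whose empty word is trivially its unique reduced expression, gives $U(\Gamma) = 1 + n + 2\binom{n}{2} = n^2 + 1$. The step I expect to require the most care, and the genuine obstacle, is exactness of this bijection rather than the translation itself: specifically, one must confirm that the two directed readings of a chain of length at least two give \emph{distinct} group elements. This is again where uniqueness enters, for if a path word and its reverse represented the same element, uniqueness of the reduced expression would force the word to be a palindrome, which is impossible for a path through distinct vertices of length at least two.
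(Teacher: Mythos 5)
Your proposal is correct and follows essentially the same route as the paper: you characterize the non-identity elements with a unique reduced expression as the words read off directed simple paths (chains) in the tree --- consecutive letters must be adjacent, no letter may repeat, a repetition would force a cycle --- and then count via Lemma \ref{chain} to get $2\binom{n}{2} + n + 1 = n^2 + 1$. If anything, your write-up is slightly more careful than the paper's, since you explicitly verify the converse direction (that every directed path word is reduced and is the unique expression of the element it represents) and that the two readings of a chain of length at least two give distinct elements, points the paper treats as immediate.
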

\begin{proof}
Suppose $w$ has a unique reduced expression $r_1 \cdots r_k$ for some (not necessarily distinct) $r_i \in R$. Moreover, if any $r_i$ commutes with $r_{i+1}$, for $1 \leq i < k$, then $w$ would have another reduced expression $r_1\cdots r_{i-1}r_{i+1}r_ir_{i+2}\cdots r_k$. Hence $m_{r_ir_{i+1}} = 3$ for all $i$. Suppose $r_i = r_j$ for some $i < j$, and let us assume $|j-i|$ is minimal such that this occurs. That is, $r_i, r_{i+1}, \ldots r_{j-1}$ are all distinct elements of $R$. Obviously $j = i+1$ is impossible as this is a reduced expression. If $j = i+2$ then we have $r_ir_{i+1}r_i$ as a subexpression of $w$. But $\Gamma$ is simply laced, meaning $r_{i}r_{r+1}r_i = r_{i+1}r_ir_{i+1}$, contradicting the uniqueness of the reduced expression for $w$. Therefore $j > i+2$. But then the vertices $r_{i}, r_{i+1}, \ldots, r_{j-1}$ form a cycle of $\Gamma$, contradicting the fact that $\Gamma$ is a tree. Therefore in fact the $r_i$ are all distinct. Hence $\Gamma_w$, the induced subgraph whose vertex set is $r_1, \ldots, r_k$, is the following chain.
 \begin{center} 
 \unitlength 1.00mm
            \linethickness{0.4pt}      
 \begin{picture}(70.00,20)(0,0)
         \put(10,10){\line(1,0){12}}
         \put(22,10.00){\line(1,0){8}}
         \put(50,10.00){\line(1,0){8}}
         \put(58,10){\line(1,0){12}}
        
         \put(58.00,10.00){\circle*{2.00}}
         \put(10,10.00){\circle*{2.00}}
         \put(22,10.00){\circle*{2.00}}
         \put(70,10.00){\circle*{2.00}}
         \put(10,7){\makebox(0,0)[cc]{$r_{1}$}}
         \put(22,7){\makebox(0,0)[cc]{$r_{2}$}}
         \put(58,7){\makebox(0,0)[cc]{$r_{k-1}$}}
         \put(70,7){\makebox(0,0)[cc]{$r_{k}$}}
         \put(35.00,10.00){\line(1,0){0.4}}
         \put(45.00,10.00){\line(1,0){0.4}}
         \put(40.00,10.00){\line(1,0){0.4}}
         \end{picture}    \end{center} 
         Every element $w$ with a unique reduced expression corresponds to a unique chain of $\Gamma$. However $w^{-1}$ produces the same chain, and $w^{-1} = w$ if and only if $n \leq 1$. Therefore each chain of length at least two produces exactly two elements with unique reduced expressions. By Lemma \ref{chain} there are $\binom{n}{2}$ chains of length at least 2, each providing two elements with a unique reduced expression.  Each of the $n$ vertices (chains of length 1) provides exactly one element with a unique reduced expression. Adding the identity element we therefore see that $U(\Gamma) = 2\binom{n}{2} + n + 1 = n^2 + 1$. 
\end{proof}

\begin{thm}
\label{m}
Suppose $\Gamma$ is a tree with $n$ vertices, no infinite bonds and exactly one edge with a label $m$ greater than three. Let $a$ and $b$ be the orders of the two induced subgraphs obtained by removal of this edge (so $a + b = n$). Then $$U(\Gamma) = \left\{\begin{array}{ll} \textstyle\frac{1}{2}mn^2 + 1 - 2ab   & {\text{if $m$ even;}}\vspace*{4mm}\\
\textstyle\frac{1}{2}(m-1)n^2 + 1  & {\text{if $m$ odd.}}
\end{array}\right.$$
\end{thm}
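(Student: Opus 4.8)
The plan is to characterise the reduced words admitting no elementary operation, observe that these are precisely the unique reduced expressions, and then enumerate them directly. As in the proof of Proposition~\ref{prop}, the starting point is that by \cite{matsumoto} an element has a unique reduced expression exactly when no single elementary operation can be applied to one (hence any) of its reduced expressions: if a second reduced expression existed, the first elementary operation in a connecting sequence would already apply to the original word. Conversely, a word with no two equal adjacent letters to which no elementary operation applies is automatically reduced, since a non-reduced word can be carried by elementary operations to one with a repeated adjacent letter, and the first such operation would apply to the original. I will therefore count words $r_1 \cdots r_k$ in which (i) consecutive letters are distinct and non-commuting (equivalently, adjacent in $\Gamma$), (ii) no consecutive triple $r_i r_{i+1} r_i$ occurs along an edge labelled $3$, and (iii) no alternating subword $[rs]^m$ occurs along the unique edge labelled $m$; each such word is the unique reduced expression of a distinct element, so it suffices to count them and add $1$ for the identity.

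Next I would establish the shape of such a word. Writing the special edge as $\{p,q\}$, conditions (i) and (ii) say that $r_1, \ldots, r_k$ is a walk in $\Gamma$ that never immediately backtracks across an edge labelled $3$; backtracking $p,q,p$ across the special edge is permitted precisely because $m > 3$. The key structural claim is that the steps using the special edge must form a single contiguous block. Indeed, between two successive crossings the walk stays inside one of the two components obtained by deleting the edge, entering and leaving that component at the same vertex ($p$ or $q$); since each component is a tree, a non-backtracking walk cannot return to its starting vertex, so this intermediate excursion is trivial and the two crossings are adjacent. Hence every such word decomposes canonically as $P_1 \cdot B_\ell \cdot P_2$, where $B_\ell$ is the maximal alternating block on $\{p,q\}$ of some length $\ell$, and $P_1, P_2$ are (by the same tree/non-backtracking argument) simple paths ending at, respectively starting from, the ends of $B_\ell$, each lying in the appropriate component. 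Condition (iii) is then exactly $\ell \le m-1$, and $\ell \ge 2$ precisely when the special edge is used.

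Finally I would carry out the count. Words avoiding the special edge lie in one component and, by Proposition~\ref{prop} applied to the two simply laced subtrees, contribute $a^2$ and $b^2$ non-identity elements. For the crossing words I count the decompositions $P_1 \cdot B_\ell \cdot P_2$ for each $2 \le \ell \le m-1$: a simple path in a tree of order $c$ ending at a fixed vertex is determined by its other endpoint, so there are exactly $c$ of them. When $\ell$ is even the block joins the two components, giving $2ab$ words; when $\ell$ is odd both pendant paths lie in the same component, giving $a^2 + b^2$ words. Here one must check these are genuinely valid and mutually distinct even when $P_1$ and $P_2$ overlap, which is immediate because conditions (i)--(iii) involve only consecutive subwords. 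Summing over $\ell$ while separating the parities of $m$ (so as to count the even and odd values of $\ell$ in $\{2, \ldots, m-1\}$), then adding the identity and the $a^2+b^2$ non-crossing elements, a short computation using $n=a+b$ yields the two stated formulae. The main obstacle is the structural claim that the special-edge crossings are forced to be contiguous; once that is in hand the enumeration is routine bookkeeping, the only delicate point being the possible overlap of $P_1$ and $P_2$ in the odd case.
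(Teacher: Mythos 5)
Your proposal is correct, and although it shares the paper's structural skeleton --- non-crossing elements contribute $a^2+b^2$, and every crossing element decomposes as a pendant simple path, a contiguous alternating block of length $2 \le \ell \le m-1$ on the special edge, and another pendant simple path --- the enumeration is organised genuinely differently, in a way that simplifies it. The paper counts \emph{elements}: each $w$ is mapped to a chain (or pair of chains ending at $r$ or $s$) in $\Gamma$, the factor of two for $w$ versus $w^{-1}$ has to be tracked, and consequently the palindromic case $r_1=s_j$ (where $w$ is an involution) must be split off, giving the count $a+b+a(a-1)+b(b-1)$ per odd $L$. You count \emph{words}: once unique reduced expressions are characterised as exactly those words with no equal or commuting adjacent letters, no $rsr$ across a label-$3$ edge, and no $[rs]^m$ across the special edge, each valid word is the unique reduced expression of a distinct element, so the per-$\ell$ counts become clean products ($2ab$ for $\ell$ even, $a^2+b^2$ for $\ell$ odd) and all inverse/involution bookkeeping disappears. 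Your contiguity argument (the special edge is a bridge, and a non-backtracking closed walk in a tree is trivial) is also more self-contained than the paper's, which passes through the observation that the excursion $u=r_i\cdots r_{j-1}$ is itself an element with a unique reduced expression and then derives a cycle contradiction. The one point you must tighten is the converse criterion stated at the outset: that a word with no repeated adjacent letter, to which no elementary operation applies, is automatically reduced. This is \emph{not} Matsumoto's theorem (which only relates reduced expressions of a single element to one another); it is Tits' solution to the word problem for Coxeter groups (the ``word property,'' found for instance in Bj\"orner--Brenti), and it should be cited as such rather than attributed to \cite{matsumoto}. The fact is standard and true, so this is a citation defect rather than a mathematical gap --- and indeed the paper's own proof implicitly requires the same converse when it asserts that each chain ``produces exactly'' two elements with unique reduced expressions, so your explicit handling of it is, if anything, a gain in rigour.
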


\begin{proof}
Let $r$ and $s$ be the vertices of $\Gamma$ which are joined by the edge labelled $m$. Consider the subgraph induced by removing the edge $m$. Let $\Delta$ be the connected component containing $r$ and $\Sigma$ be the connected component containing $s$. Both $\Delta$ and $\Sigma$ are simply laced finite trees. Set $a = |\Delta|$ and $b = |\Sigma|$. Let $w$ be a non-identity element of $W$ that has a unique reduced expression, and let $\Gamma_w$ be the subgraph of $\Gamma$ induced by the elements of $R$ contained in the expression for $W$.  Writing $w = r_1\cdots r_k$ for some $r_i \in R$, we observe that for each $i$ in $\{1, \ldots, k=1\}$, we have that $r_i$ and $r_{i+1}$ are distinct (otherwise the expression would not be reduced) and moreover there is an edge between $r_i$ and $r_{i+1}$ in $\Gamma_w$, otherwise $r_i$ would commute with $r_{i+1}$, implying the existence of a second reduced expression. Therefore $\Gamma_w$ is connected. Suppose first that $\Gamma_w$ does not contain the edge labelled $m$. Then $\Gamma_w$ is a simply-laced tree and, as in the argument for Proposition \ref{prop}, $\Gamma_w$ is in fact a chain and $r_1, \ldots r_k$  are all distinct. Also $\Gamma_w$ must be contained in either $\Delta$ or $\Sigma$. By Lemma \ref{chain} there are $\binom{a}{2}$ chains of length at least 2 in $\Delta$ and $\binom{b}{2}$ chains of length at least 2 in $\Sigma$. Each of these results in two elements ($w$ and $w^{-1}$) with a single reduced expression. Each of the vertices of $\Gamma$ gives one further element. Therefore the total number of non-identity elements $w$ with a unique reduced expression such that $\Gamma_w$ does not contain the edge labelled $m$ is $2(\binom{a}{2} + \binom{b}{2}) + n$, which, remembering that $a+b = n$, is equal to $a^2 + b^2$. \\

Now we consider the case where $\Gamma_w$ does contain the edge labelled $m$. Suppose that there are $i, j$ with $1 \leq i < j \leq k$ such that $\{r_i, r_j\} \subseteq \{r,s\}$, and $\{r_{i+1}, \ldots, r_{j-1}\} \cap \{r, s\} = \emptyset$. Let $u = r_ir_{i+1}\cdots r_{j-1}$. Then $u$ is an element with a unique reduced expression; moreover $\Gamma_u$ does not contain the edge labelled $m$. Thus $\Gamma_u$ is a chain and the elements $r_i,r_{i+1},\ldots, r_{j-1}$ are all distinct. But $r_{j-1}$ is adjacent in $\Gamma_w$ to $r_j$ which is either $r$ or $s$. Either way, it implies that there is a cycle in $\Gamma$, a contradiction. Suppose for the moment that $r$ appears before $s$ in $w$. Then $w$ is of the form $r_1 \cdots r_{i}[rs]^Ls_1 \cdots s_j$ where $r_1, \ldots, r_i, s_1, \ldots, s_j \in R\setminus\{r,s\}$. To preserve the uniqueness of the expression, we must have $L < m$, and to ensure that $\Gamma_w$ contains the edge labelled $m$, we also know that $L \geq 2$. 
Moreover $r_1, \ldots, r_{i}, r$ is a chain in $\Delta$. If $L$ is even then $s, s_1, \ldots, s_j$ is a chain in $\Sigma$. If $L$ is odd then $rs_1, \ldots, s_j$ is a chain in $\Delta$. 

Suppose that $L$ is even. Then $\Gamma_w$ is a chain between an element of $\Delta$ and an element of $\Sigma$ and each such $w$ results in exactly one such chain. This chain will also arise from $w^{-1}$, which is an element where $s$ appears before $r$. Therefore for each even $L$ lying between $2$ and $m-1$, each of the $ab$ chains between elements of $\Delta$ and elements of $\Sigma$ results in exactly two elements having unique reduced expressions (one where $r$ appears before $s$, one where $s$ appears before $r$). Therefore there are $2ab\lfloor\frac{m-1}{2}\rfloor$ such elements. 

Now suppose that $L$ is odd, and for the moment that $r$ appears before $s$ in the expression for $w$. This means $r$ also appears before $s$ in the expression for $w^{-1}$. If $r_1 = s_j$ then $w$ is an involution and $r_1, \ldots, r_i, r$ is a chain in $\Delta$ from $r_1$ to $r$. There are $a$ chains  in $\Delta$ ending in $r$, each producing exactly one such involution $w$. Hence there are $a$ elements of this form for each odd $L$ (similarly there are $b$ involutions in which $s$ appears first). If $r_1 \neq s_j$ then $w$ is not an involution, so $\Gamma_w$ corresponds to two elements, $w$ and $w^{-1}$, both of which have the property that $r$ appears first in the reduced expression. Since $r_1,\ldots, r_i, r$ and $r, s_1, \ldots, s_j$ are chains in $\Delta$, the number of such elements $w$ is twice the number of ways of choosing two different chains ending at $r$, because each such pair of chains results in two elements, $w$ and $w^{-1}$. So we get $2\binom{a}{2}$ elements $w$ which is just $a(a-1)$. Similarly for each odd $L$ there are $b(b-1)$ elements $w$ where $s$ appears before $r$. So for each odd $L$ the total number of elements $w$ with a unique reduced expression is $a + b + a(a-1) + b(b-1) = a^2 + b^2$. Summing over the odd $L$ between 2 and $m-1$ we get $(a^2 + b^2)\lfloor\frac{m-2}{2}\rfloor$ elements. \\

Combining the calculations for $L$ even and $L$ odd, we see that the total number of elements $w$ with a unique reduced expression such that $\Gamma_w$ contains the edge labelled $m$ is $$2ab\left\lfloor\frac{m-1}{2}\right\rfloor + (a^2 + b^2)\left\lfloor\frac{m-2}{2}\right\rfloor.$$  To obtain $U(\Gamma)$ we must add to this the identity element plus the $a^2 + b^2$ non-identity elements $w$ for which $\Gamma_w$ does not contain the edge labelled $m$. If $m$ is even then, recalling that $a+b = n$, we get $$U(\Gamma) = a^2 + b^2 + 1 + \textstyle\frac{1}{2}(2ab + a^2 + b^2)(m-2)  = n^2 - 2ab + 1 + \textstyle\frac{1}{2}n^2(m-2) = \frac{1}{2}mn^2 + 1 - 2ab.$$ If $m$ is odd then we get \begin{align*} U(\Gamma) &= a^2 + b^2 + 1 + \textstyle\frac{1}{2}\left((2ab)(m-1) + (a^2 + b^2)(m-3)\right) \\ &= a^2 + b^2 + 1 + 2ab + \textstyle\frac{1}{2}(a^2 + b^2 + 2ab)(m-3) \\ &= \textstyle\frac{1}{2}(m-1)n^2 + 1.\qedhere\end{align*} 
\end{proof}

Theorem \ref{thm1}, Proposition \ref{prop} and Theorem \ref{m} combine to give Theorem \ref{main}, along with the following corollary, which classifies the Coxeter groups having finitely many elements with a unique reduced expression. 

\begin{cor}
Let $\Gamma$ be the Coxeter graph of $W$. Then $W$ has finitely many elements with a unique reduced expression if and only if $\Gamma$ is finite and each connected component of $\Gamma$ is a tree with no infinite bonds and at most one edge label greater than three.
\end{cor}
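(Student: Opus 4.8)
The plan is to prove the biconditional by treating each direction separately and assembling the results established earlier. The forward implication --- that if $W$ has only finitely many elements with a unique reduced expression then $\Gamma$ must be a finite forest whose connected components are trees with no infinite bonds and at most one edge label exceeding three --- is exactly the content of Theorem \ref{thm1}, so I would simply cite it.

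For the converse I would first reduce to the connected case via Lemma \ref{lemma1}. Writing $\Gamma$ with connected components $\Gamma_1, \ldots, \Gamma_n$, that lemma gives $U(\Gamma) = \left(\sum_{i=1}^n U(\Gamma_i)\right) - n + 1$, so $U(\Gamma)$ is finite precisely when every $U(\Gamma_i)$ is finite. It therefore suffices to show that each component contributes a finite count under the stated hypotheses.

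The key step is to recognise that the hypotheses on a single component match exactly the two cases already computed. Since every edge of a Coxeter graph carries a label of at least three, a component with no edge label greater than three has all edge labels equal to three, so it is a simply laced tree and Proposition \ref{prop} gives $U(\Gamma_i) = n_i^2 + 1 < \infty$, where $n_i$ denotes the order of $\Gamma_i$. A component with exactly one edge label $m > 3$ falls under Theorem \ref{m}, which yields an explicit finite value. In either case $U(\Gamma_i)$ is finite, and the reduction above finishes the argument.

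Because essentially all the substantive work already resides in Theorem \ref{thm1}, Proposition \ref{prop} and Theorem \ref{m}, I anticipate no real obstacle here; the only point demanding a little care is confirming that the dichotomy ``zero or one edge with label exceeding three'' lines up exactly with the hypotheses of Proposition \ref{prop} and Theorem \ref{m}, and that Lemma \ref{lemma1} genuinely transfers finiteness from the components to the whole graph.
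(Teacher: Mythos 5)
Your proposal is correct and follows exactly the route the paper intends: the forward direction is Theorem \ref{thm1}, and the converse combines Lemma \ref{lemma1} with Proposition \ref{prop} and Theorem \ref{m}, which is precisely how the paper assembles this corollary. The one point of care you flag---that ``no label greater than three'' versus ``exactly one label greater than three'' matches the hypotheses of Proposition \ref{prop} and Theorem \ref{m} respectively---is indeed the only check needed, and it holds since every edge label is at least three.
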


\section{Examples}

In this section we give some example calculations. Proposition \ref{prop} deals with all simply laced Coxeter graphs $\Gamma$: in each case there are $|\Gamma|^2 + 1$ elements with a unique reduced expression. So for example there are 67 such elements in the Coxeter groups of types $A_8$, $D_8$ and $E_8$ (and indeed any simply-laced Coxeter group of rank 8). For a group of type $B_n$ we have  $m=4$, $a=1$ and $b=n-1$. So by Theorem \ref{m} there are 
$2n^2 + 1 - 2(n-1)$ elements with a unique reduced expression, which is $2n(n-1) + 3$. So in $B_4$ there are 27 such elements, for example. In $F_4$ we have $a = b = 2$ and $U(F_4) = 25$. Below is a table listing $U(\Gamma)$ for each irreducible finite and affine Coxeter group.
\begin{center}
\begin{tabular}{|l|c|c|c|} \hline $\Gamma$ & $U(\Gamma)$ & $\Gamma$ & $U(\Gamma)$\\
\hline
$A_n (n \geq 1)$ & $n^2 + 1$ & $\tilde A_n (n \geq 1)$ & $\infty$\\
$B_n (n \geq 2)$ & $2n^2 - 2n + 3$ & $\tilde B_n (n \geq 3)$ & $2n^2 + 2n + 3$ \\
$D_n (n \geq 4)$ & $n^2 + 1$  & $\tilde C_n (n \geq 2)$ & $\infty$\\
$E_6$ & 37 & $\tilde D_n (n \geq 4)$ & $(n+1)^2 + 1$\\
    $E_7$ & 50  & $\tilde E_6$ & $50$\\
$E_8$ & 65 & $\tilde E_7$ & $65$\\
$F_4$ & 25  & $\tilde E_8$ & $82$\\
$I_2(m) (m \geq 6)$ & $2m-1$ & $\tilde F_4$ & $39$\\
$H_3$ & $19$  & $\tilde G_2$ & 24\\
  $H_4$ & $33$ && \\ \hline
\end{tabular}
\end{center}


\begin{thebibliography}{9}

\bibitem{eriksson} K. Eriksson, Reduced words in affine Coxeter groups, Discrete Math. 157 (1--3) (1996) 127--146. 
\bibitem{humphreys} J.E. Humphreys, Reflection Groups and Coxeter Groups, Cambridge Stud. Adv. Math., vol. 29, 1992.  
\bibitem{cells} Tobias Kildetoft, Marco Mackaay, Volodymyr Mazorchuk, Jakob Zimmermann, {\em Simple transitive 2-representations of small quotients of Soergel bimodules}, arXiv 1605.01373 (2016).

\bibitem{matsumoto} H. Matsumoto, {\em G\'{e}n\'{e}rateurs et relations des groupes de Weyl g\'{e}n\'{e}ralis\'{e}s}, C. R. Acad. Sci. Paris, 258:3419--3422, 1964.
\bibitem{shi} Jian-Yi Shi, {\em The enumeration of Coxeter elements}, J. Algebraic Combin. 6 (1997), no. 2, 161--171.
\bibitem{stanley84} R.P. Stanley, {\em On the number of reduced decompositions of elements of Coxeter groups}, European J. Combin. 5 (4) (1984) 359--372.
\bibitem{stembridge} J. R. Stembridge, {\em On the fully commutative elements of Coxeter groups}, J. Algebraic
Combin. 5 (1996), 353-385.

\end{thebibliography}
 \end{document}